\newcommand{\Vol}{\operatorname{Vol}}
\newtheorem{theorem}{Theorem}[section]
\newtheorem{theorem/definition}{Theorem/Definition}[section]
\newtheorem{lemma}{Lemma}[section]
\theoremstyle{remark}
\newtheorem{remark}{Remark}[section]
\begin{document}
\title
{Volume estimate about shrinkers }
\author{ Xu Cheng  and Detang Zhou}

\address{Instituto de Matematica\\ Universidade Federal Fluminense\\
Niter\'oi, RJ 24020\\Brazil} \email{xcheng@impa.br}
\email{zhou@impa.br}

\begin{abstract}We  derive a precise estimate on the volume growth
of the level set of a  potential  function on a complete noncompact
Riemannian manifold. As   applications, we obtain the volume growth
rate of a complete noncompact self-shrinker and a
gradient shrinking Ricci soliton. We also  prove the equivalence of  weighted
volume finiteness, polynomial  volume growth and properness of an immersed
self-shrinker in Euclidean space.

\end{abstract}
\maketitle
\date{}

%\footnotetext[1]{1991 {\em Mathematics Subject Classification}: }

\footnotetext[1]{Both authors were partially supported by CNPq and
FAPERJ, Brazil.}

\section{Introduction}\label{Sec-1}
In this paper, we estimate  the volume  of complete  noncompact
Riemannian manifolds without assuming curvature conditions.
Let $(M, g)$ be an $n$-dimensional complete noncompact Riemannian
manifold  on which there exists a nonnegative smooth function $f$
satisfying ``shrinking soliton'' conditions. 

Let $\Delta_f$ be  the linear
operator defined  by $$\Delta_fu=\Delta u -\langle\nabla f,
\nabla u\rangle.$$ 
It is known that the operator $\Delta_f$ is
symmetric with respect to the weighted measure $e^{-f}dv$ on the space of  smooth function  with compact support.
Denote the level set  of $2\sqrt{f}$,  its volume and its weighted volume by
$$ D_r=\{x\in M: 2\sqrt{f}< r\} ,$$
$$ V(r)=\text{Vol}(D_r)=\int_{D_r}dv,$$
and $$ V_f(r)=\text{Vol}_f(D_r)=\int_{D_r}e^{-f}dv.$$
We get  the  volume growth estimate of the set $D_r$ and
the finiteness of the weighted volume of $M$ in the following
\begin{theorem}\label{thm-f-1} (Thm.\ref{thm-f}) Let $(M, g)$ be a complete noncompact Riemannain manifold.  If $f$ is a  proper $C^{\infty}$ function on $M$ satisfying $|\nabla f|^2\leq f$ on  $D_r$ for all $r$ and
\begin{equation} \Delta_f f+f
\leq k \label{I-0}
\end{equation}
for some constant $k$,
 then $M$ has finite weighted volume, that is,  
 \begin{equation*} V_f(M)=\int_M e^{-f} dv <+\infty
 \end{equation*} and
$$V(r)\leq C r^{2k}$$
for $r\geq 1$, where $C$ is a constant depending only on
$\int_M e^{-f} dv$.
\end{theorem}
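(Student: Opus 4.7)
The plan is to convert the pointwise hypothesis into an integro-differential inequality for the weighted volume, apply a Gronwall-type argument to deduce $V_f(M)<\infty$, and then upgrade this via iterated moment bounds to obtain the polynomial bound on $V(r)$.

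First, I would multiply $\Delta_f f + f \leq k$ by $e^{-f}$ and integrate over $D_r$. Since $e^{-f}\Delta_f f = \div(e^{-f}\nabla f)$, the divergence theorem yields a non-negative boundary contribution $e^{-r^2/4}\int_{\pa D_r}|\nabla f|\,dA$ which can be dropped, leaving
$$\int_{D_r} f e^{-f}\,dv \;\leq\; k\, V_f(r).$$
Since $|\nabla f|^2 \leq f$ makes $\rho := 2\sqrt f$ one-Lipschitz, the coarea formula expresses both sides as integrals of $V_f'(t)$; one integration by parts then produces the ODI
$$\Bigl(\frac{r^2}{4}-k\Bigr)V_f(r) \;\leq\; \int_0^r \frac{t}{2}V_f(t)\,dt.$$
Setting $G(r) := \int_0^r \frac{t}{2}V_f(t)\,dt$, this becomes $G'(r)\leq \frac{2r}{r^2-4k}G(r)$ for $r > 2\sqrt k$, hence $G(r)/(r^2-4k)$ is non-increasing and $V_f(r)$ is uniformly bounded, proving $V_f(M)<\infty$.

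For the polynomial volume bound, I would iterate the integration-by-parts argument by testing the hypothesis against $f^m e^{-f}$ (again invoking $|\nabla f|^2 \leq f$ to control the gradient terms). This gives the recursion $\int_{D_r} f^{m+1}e^{-f}\,dv \leq (k+m)\int_{D_r} f^m e^{-f}\,dv$, hence the moment bounds
$$\int_M f^m e^{-f}\,dv \;\leq\; \frac{\Gamma(k+m)}{\Gamma(k)}\,V_f(M), \qquad m=0,1,2,\dots$$
Combined with the identity
$$V(r) \;=\; V_f(M) - e^{r^2/4}\psi\bigl(\tfrac{r^2}{4}\bigr) + \int_0^{r^2/4} e^s \psi(s)\,ds, \quad \psi(R) := V_f(M)-V_f\bigl(2\sqrt R\bigr),$$
(obtained by integrating by parts in $V(r)=\int_0^r e^{t^2/4}V_f'(t)\,dt$ and using $V'(t)=e^{t^2/4}V_f'(t)$), Markov's inequality on the moments yields super-polynomial decay of $\psi(R)$; optimizing in $m$ via Stirling sharpens this to an essentially Gaussian tail of the right strength so that the two $e^{r^2/4}$-sized quantities cancel to produce $V(r) \leq C r^{2k}$.

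The main obstacle is this final cancellation: the identity expresses $V(r)$ as a difference of exponentially large terms, so the tail estimate on $\psi$ must be sharp enough (roughly $\psi(R)\leq CR^{k-1}e^{-R}$) that the exponentials cancel exactly, leaving a polynomial of degree $2k$. A crude Markov argument alone loses a half-power of $R$ and yields only $V(r)\leq Cr^{2k+1}$; closing this gap requires either a careful optimization of the moment bound or combining it directly with the ODI derived above, and is the technical heart of the proof.
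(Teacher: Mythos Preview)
Your route to $V_f(M)<\infty$ via the ODI for $G(r)=\int_0^r\tfrac{t}{2}V_f(t)\,dt$ is valid, and the moment recursion $\int_M f^{m+1}e^{-f}\,dv\le (k+m)\int_M f^{m}e^{-f}\,dv$ is also correct. The genuine gap is the final step. Your identity writes $V(r)$ as a difference of exponentially large terms, so you must control the tail to accuracy $\psi(R)\le CR^{k-1}e^{-R}$. Neither of the tools you name achieves this: Markov with the optimal single moment $m\approx R-k$ gives only $\psi(R)\le CR^{k-1/2}e^{-R}$, a Chernoff optimization of $\sum\lambda^m/m!$ gives $\psi(R)\le CR^{k}e^{-R}$, and ``combining with the ODI'' cannot help because the ODI bounds $V_f(r)$ from above and carries no information on how fast $V_f(r)\to V_f(M)$. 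Plugging either tail bound into $\int_0^{r^2/4}e^s\psi(s)\,ds$ yields $r^{2k+1}$ or $r^{2k+2}$, and no further ``careful optimization'' of a pointwise tail bound repairs this.

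The fix is to skip the detour through $\psi$ and use your moments \emph{summed}, not term by term. The identity $\sum_{m\ge 0}\tfrac{\lambda^m}{m!}\,\tfrac{\Gamma(k+m)}{\Gamma(k)}=(1-\lambda)^{-k}$ combined with your recursion gives
\[
\int_M e^{-(1-\lambda)f}\,dv \;=\; \sum_{m\ge 0}\frac{\lambda^m}{m!}\int_M f^{m}e^{-f}\,dv \;\le\; (1-\lambda)^{-k}\,V_f(M),
\]
and choosing $\lambda=1-r^{-2}$ yields directly $V(r)\le e^{1/4}\int_{\bar D_r}e^{-f/r^2}\,dv\le e^{1/4}r^{2k}V_f(M)$, with no cancellation of large terms. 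This is precisely the paper's argument in summed form: the paper sets $I(t)=t^{-k}\int_{\bar D_r}e^{-f/t}\,dv$, uses $\Delta_f f+f\le k$ together with $|\nabla f|^2\le f$ and Stokes' theorem to show $I'(t)\le 0$ for $t\ge 1$, and reads off $I(r^2)\le I(1)$, i.e.\ $r^{-2k}\int_{\bar D_r}e^{-f/r^2}\,dv\le \int_{\bar D_r}e^{-f}\,dv$. Both the polynomial bound and the finiteness of $V_f(M)$ then follow in a few lines; the monotone one-parameter family is the key idea your proposal is missing.
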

The classical volume comparison theorems in Riemannian geometry are
one of  the basic ingredients of the analysis on manifolds. It is
usually  done by assuming that $M$ is an $n$-dimensional
Riemannian manifold with Ricci curvature being bounded below.  In our theorem, instead of assuming  curvature conditions,
we assume the existence of some functions which arise naturally in
the study of gradient shrinking Ricci solitons of Ricci flow and
self-shrinkers of mean curvature flow. Both  gradient shrinking
Ricci solitons and self-shrinkers are important in the singularity
analysis of
  Ricci flow and mean curvature flow respectively (\cite{MR1375255}, \cite{Hu}, \cite{P}) and recently  have been studied very much   (see  
  \cite{CM1}, \cite{CM2},  survey papers \cite{Cao09}, \cite{MR2648937},  etc). We observe both of them satisfy  inequality (\ref{I-0}) and so Theorem \ref{thm-f-1} can be applied  to them. 
Besides, Theorem \ref{thm-f-1} is of independent interest.  We only need to find a proper function on $M$ satisfying the
assumption of the  theorem.

As the first application,  we consider
self-shrinkers for mean curvature flow (MCF) in $\mathbb{R}^{n+1}$.
Recall that a one-parameter family of hypersurfaces $\Sigma_t\subset
\mathbb{R}^{n+1} $ flows by mean curvature if
$$\partial _tx=-H\bf{n},
$$
where $\bf{n}$ is the outward unit normal and $H$ is the mean
curvature. The flow $\Sigma_t$ is said to be a self-shrinker if
$$\Sigma_t=\sqrt{-t}\Sigma_{-1}
$$
for all $t<0$. This is equivalent to that $\Sigma=\Sigma_{-1}$
satisfies the equation
\begin{equation}H=\frac{\langle x, \bf{n}\rangle}{2}. \label{I-4}
\end{equation}
We also call hypersurface $\Sigma$ a self-shrinker in $\mathbb{R}^{n+1}$. We
obtain
\begin{theorem} \label{thm-S-2-1}  Let $\Sigma^n$ be a complete noncompact properly immersed self-shrinker  in Euclidian space $\mathbb{R}^{n+1}$ with nonnegative constant $\beta\leq\inf H^2$.  Then,
there is a positive constant $C$ such that for $r\geq 1$
$$ \Vol( B_{r}(0)\cap \Sigma)\leq C r^{n-2\beta}.$$
\end{theorem}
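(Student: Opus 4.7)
The plan is to apply Theorem \ref{thm-f-1} on $\Sigma$ to a constant shift of the standard shrinker potential $f = |x|^2/4$; the geometric hypothesis $H^2 \geq \beta$ will then translate directly into a gain of $2\beta$ in the volume-growth exponent.

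First I will record the well-known shrinker identities. Writing $\nabla f = x^T/2$ and using the orthogonal decomposition $|x|^2 = |x^T|^2 + \langle x,\mathbf{n}\rangle^2$ together with the shrinker equation \eqref{I-4}, one obtains $|\nabla f|^2 = f - H^2$. Similarly, from $\Delta x = -H\mathbf{n}$ I will compute $\Delta f = n/2 - H^2$ and hence $\Delta_f f = n/2 - f$, so that $\Delta_f f + f = n/2$. Since $\Sigma$ is properly immersed, $f$ is proper. A blind application of Theorem \ref{thm-f-1} to $f$ already yields $V(r) \leq Cr^n$; however, the identity $\Delta_f f + f = n/2$ is insensitive to $\beta$, so to extract the improvement I must exploit the sharper gradient bound $|\nabla f|^2 = f - H^2 \leq f - \beta$ that the hypothesis supplies.

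Next, I will observe that $H^2 \geq \beta$ actually forces $\Sigma$ to avoid the open ball of radius $2\sqrt\beta$: Cauchy--Schwarz applied to $\langle x,\mathbf{n}\rangle = 2H$ gives $|x|^2 \geq \langle x,\mathbf{n}\rangle^2 = 4H^2 \geq 4\beta$, so $f \geq \beta$ everywhere on $\Sigma$. Consequently $\tilde f := f - \beta$ is smooth, proper, and nonnegative. The shift preserves $\nabla\tilde f = \nabla f$ and $\Delta\tilde f = \Delta f$, so $|\nabla\tilde f|^2 = f - H^2 \leq f - \beta = \tilde f$ and $\Delta_{\tilde f}\tilde f + \tilde f = \Delta_f f + (f - \beta) = (n - 2\beta)/2$. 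Thus $\tilde f$ satisfies the hypotheses of Theorem \ref{thm-f-1} with the improved constant $k = (n-2\beta)/2$.

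Finally, I will apply Theorem \ref{thm-f-1} to $\tilde f$, obtaining $\Vol(\tilde D_r) \leq Cr^{n-2\beta}$ for $r \geq 1$, where $\tilde D_r = \{2\sqrt{\tilde f} < r\} = \{|x|^2 < r^2 + 4\beta\}$. Since $B_r(0) \cap \Sigma \subset \tilde D_r$, the desired estimate follows immediately. The step I expect to require the most care is the verification that $\Sigma$ lies outside the ball of radius $2\sqrt\beta$ so that $\tilde f$ is globally nonnegative---without this, the shift trick fails and Theorem \ref{thm-f-1} cannot be applied---but this falls out cleanly from \eqref{I-4} via Cauchy--Schwarz. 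For $\beta = 0$ the entire argument collapses to applying Theorem \ref{thm-f-1} to $f$ directly with $k = n/2$.
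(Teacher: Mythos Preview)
Your proposal is correct and follows essentially the same route as the paper: shift the potential to $\tilde f = f - \beta$, verify $|\nabla\tilde f|^2 \le \tilde f$ and $\Delta_{\tilde f}\tilde f + \tilde f = (n-2\beta)/2$, then invoke Theorem~\ref{thm-f-1}. The only cosmetic difference is that you establish $\tilde f \ge 0$ via Cauchy--Schwarz on $\langle x,\mathbf{n}\rangle$, whereas the paper reads it off directly from $f - |\nabla f|^2 = H^2 \ge \beta$; these are the same computation.
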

The upper bounds of the volume
 in Theorem
\ref{thm-S-2-1} are optimal because $\mathbb{R}^n$ and
cylinder self-shrinker  $S^{k}(\sqrt{2k})\times \mathbb{R}^{n-k}$  have the corresponding volume growth
rates.
 In  \cite{DX}, Ding and Xin proved the Euclidean volume growth of a
 complete noncompact properly immersed self-shrinker.  Theorem \ref{thm-S-2-1} implies their result.

 Further, in  Section \ref{S-5} we study the relation among  weighted
volume finiteness,  polynomial volume growth and
properness of a  self-shinker. In the works of Ecker-Huisken
\cite{MR1025164} and Colding-Minicozzi \cite{CM1} the polynomial
volume growth plays an important role in studying self-shrinker. It
is satisfied when the self-shrinker is a time-slice of a blow up. In
general, for a complete noncompact self-shrinker $\Sigma$, we show
in Theorem \ref{thm4-1} that the following statements are equivalent,

\begin{theorem}\label{thm4-1}  For any  complete immersed self-shrinker $\Sigma^n$ in $\mathbb{R}^{n+1}$, the following statements are equivalent,
\begin{enumerate}
\item  [(i)]  $\Sigma$ is proper;
\item  [(ii)] $\Sigma$ has Euclidean volume growth;
\item  [(iii)] $\Sigma$ has  polynomial volume growth;
\item  [(iv)] $\Sigma$ has  finite weighted volume, that is, $\int_\Sigma e^{-\frac{|x|^2}{4}}dv<\infty.$
\end{enumerate}
\end{theorem}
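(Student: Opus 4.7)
The plan is to prove the equivalence cyclically: $(i)\Rightarrow (ii)\Rightarrow (iii)\Rightarrow (iv)\Rightarrow (i)$. The first three implications are the easy ones and I would dispatch them first. For $(i)\Rightarrow (ii)$, the choice $\beta = 0$ is always admissible in Theorem \ref{thm-S-2-1}, and the conclusion is precisely Euclidean volume growth. Equivalently, on any self-shrinker the function $f := |x|^2/4$ restricted to $\Sigma$ satisfies $|\nabla^\Sigma f|^2 = f - H^2 \leq f$ and $\Delta_f f + f = n/2$, so once $\Sigma$ is proper, $f$ is proper on $\Sigma$ and Theorem \ref{thm-f-1} applies with $k = n/2$, yielding $V(r)\leq Cr^n$ and $V_f(\Sigma) < \infty$ simultaneously. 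The step $(ii)\Rightarrow(iii)$ is immediate. For $(iii)\Rightarrow(iv)$, decompose $\Sigma$ into the extrinsic shells $A_j = \{j\leq|x|<j+1\}\cap\Sigma$, combine the polynomial bound $\Vol(A_j)\leq C(j+1)^d$ with the pointwise bound $e^{-|x|^2/4}\leq e^{-j^2/4}$ on $A_j$, and sum the convergent series.

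The real content is $(iv)\Rightarrow(i)$, which I would prove by contradiction. Suppose $\Sigma$ is not proper. Then there exists $R>0$ for which $F^{-1}(\overline{B_R(0)})$ fails to be compact in $\Sigma$; this set is closed in $\Sigma$, so by completeness and Hopf--Rinow it must be intrinsically unbounded. A greedy selection then produces a sequence $\{p_i\}\subset\Sigma$ with $|F(p_i)|\leq R$ and intrinsic separation $d_\Sigma(p_i, p_j)\geq 2\sigma_0$ for $i\neq j$, where $\sigma_0>0$ is a small constant to be chosen below. Because $\nabla^\Sigma|x|^2 = 2x^T$ forces $\bigl|\nabla^\Sigma|x|\bigr|\leq 1$, the intrinsic balls $B^\Sigma_{\sigma_0}(p_i)$ are pairwise disjoint and each lies in the extrinsic region $\{|x|\leq R+\sigma_0\}$.

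The crux is a uniform lower bound $\Vol(B^\Sigma_{\sigma_0}(p_i))\geq c(R,n)>0$. On each such ball, the shrinker equation gives $|H|=|\langle x,\mathbf{n}\rangle|/2 \leq (R+\sigma_0)/2$, so the mean curvature is bounded by a constant depending only on $R$. The classical monotonicity formula for smoothly immersed hypersurfaces with bounded mean curvature then supplies, for $\sigma_0$ chosen small depending only on $R$ and $n$, the desired uniform lower bound on the intrinsic ball volume. Granting this,
\begin{equation*}
\int_\Sigma e^{-|x|^2/4}\,dv \;\geq\; \sum_{i=1}^\infty \int_{B^\Sigma_{\sigma_0}(p_i)} e^{-|x|^2/4}\,dv \;\geq\; \sum_{i=1}^\infty c(R,n)\, e^{-(R+\sigma_0)^2/4} \;=\; +\infty,
\end{equation*}
contradicting (iv).

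The main obstacle is precisely this uniform local volume lower bound: for a general self-shrinker we have no a priori control on the second fundamental form or the injectivity radius, so the bound cannot come from elementary comparison geometry. The leverage is that the self-shrinker equation converts the extrinsic bound $|x|\leq R+\sigma_0$ into a pointwise mean curvature bound, after which the monotonicity formula does the geometric work.
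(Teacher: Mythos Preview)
Your proof is correct and follows the same cycle $(i)\Rightarrow(ii)\Rightarrow(iii)\Rightarrow(iv)\Rightarrow(i)$ as the paper, with the same contradiction setup for the last step (non-properness $\Rightarrow$ infinitely many intrinsically separated points in a fixed extrinsic ball, shrinker equation $\Rightarrow$ $|H|$ bounded there, local volume lower bound $\Rightarrow$ infinite weighted volume).

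One technical point deserves a word. The monotonicity formula you cite is normally stated for \emph{extrinsic} balls $B_\sigma(p)\cap\Sigma$, whereas you need the lower bound for the \emph{intrinsic} balls $B^\Sigma_{\sigma_0}(p_i)$ (which are the ones you know to be disjoint). Since $B^\Sigma_{\sigma_0}(p_i)\subset B_{\sigma_0}(p_i)\cap\Sigma$, the extrinsic statement does not directly give what you want. The paper closes this gap by proving the intrinsic version from scratch: with $r_k$ the extrinsic distance from $p_k$, the identity $\Delta_\Sigma r_k^2 = 2n + 2\langle \mathbf{H}, x-p_k\rangle$ gives $\Delta_\Sigma r_k^2 \geq 2n - 2Rr_k$ on $B^\Sigma_\mu(p_k)$; integrating over the intrinsic ball, applying the divergence theorem and the coarea formula for the \emph{intrinsic} distance (using $r_k\leq d_\Sigma(\cdot,p_k)$) yields $(n-R\mu)V(\mu)\leq \mu V'(\mu)$, hence $V(\mu)\geq \omega_n \mu^n e^{-R\mu}$. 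This is the same computation underlying the classical monotonicity, so your invocation is morally right, but the intrinsic/extrinsic distinction is exactly the place where a referee would ask for detail.
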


Theorem \ref{thm-S-2-1} and \ref{thm4-1} are the consequences of the corresponding general results of self-shrinkers in $\mathbb{R}^{n+p}, p\geq 1$  (see Theorem \ref{thm-S} and \ref{thm5}).

Using Theorem \ref{thm4-1}, we can rephrase the compactness theorem
of Colding-Minicozzi \cite{CM2} as the following

\begin{theorem}(Colding-Minicozzi \cite{CM2}) Given an integer $g\ge 0$ and a constant $A>0$, the set
\[\mathcal{ S}(g,A):=\left\{
\begin{split}\Sigma: \Sigma&\textrm{ is a complete smooth embedded self-shrinker in $\mathbb{R}^3$ with }\\
&\textrm{genus at most $g$ and }
\int_{\Sigma}e^{-\frac{|x|^2}{4}}dv\le A
\end{split} \right\}
\]
is compact with respect to the topology of $C^m$ convergence on
compact subsets for any $m\ge 2$.
\end{theorem}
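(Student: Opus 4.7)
The plan is to reduce the statement to the compactness theorem that Colding and Minicozzi actually prove in \cite{CM2}, which takes as its hypothesis a uniform polynomial area growth bound (together with the genus bound). The new ingredient here is Theorem \ref{thm4-1}, paired with the quantitative volume estimate in Theorem \ref{thm-S-2-1}; together they convert the uniform weighted volume bound defining $\mathcal{S}(g,A)$ into the kind of uniform area bound that \cite{CM2} requires as input.

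First I would check membership class-wise. Fix $\Sigma \in \mathcal{S}(g,A)$. Since $\int_\Sigma e^{-|x|^2/4}\hh dv \le A<\infty$, condition (iv) of Theorem \ref{thm4-1} holds, and hence so do (i)--(iii): $\Sigma$ is proper, has Euclidean volume growth, and has polynomial volume growth. Thus every element of $\mathcal{S}(g,A)$ automatically belongs to the class of smooth, complete, embedded, properly immersed self-shrinkers of polynomial area growth to which \cite{CM2} applies --- the qualitative hypothesis on the family is upgraded for free.

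Next I would extract a \emph{uniform} polynomial area bound along the family. Applying Theorem \ref{thm-S-2-1} with $\beta=0$ (only the condition $H^2\ge 0$ is needed) to each $\Sigma\in\mathcal{S}(g,A)$ gives
\[
\Vol(B_r(0)\cap\Sigma)\le C\hh r^n \qquad \text{for } r\ge 1.
\]
The crucial point, visible in the proof of Theorem \ref{thm-f-1}, is that $C$ depends only on the weighted volume $\int_\Sigma e^{-f}\hh dv$. Since this weighted integral is bounded uniformly by $A$ across $\mathcal{S}(g,A)$, one obtains a single constant $C=C(A,n)$ that works simultaneously for every $\Sigma$ in the family. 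This is exactly the uniform area growth bound that \cite{CM2} takes as input.

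Finally I would invoke the original Colding--Minicozzi compactness theorem in \cite{CM2}: the family of smooth complete embedded self-shrinkers in $\mathbb{R}^3$ with genus $\le g$ and a uniform polynomial area bound is sequentially compact with respect to $C^m$ convergence on compact subsets, for every $m\ge 2$. Combined with the previous two steps, this yields the compactness of $\mathcal{S}(g,A)$. The one point that merits verification is the uniformity of the constant in the middle step --- namely that the proof of Theorem \ref{thm-f-1} genuinely produces a constant depending only on the weighted volume and dimension, with no hidden dependence on the geometry of the individual $\Sigma$. Once this is granted, the theorem is essentially a one-line reduction to \cite{CM2}, with Theorem \ref{thm4-1} supplying the qualitative half and Theorem \ref{thm-S-2-1} supplying the quantitative half of the translation.
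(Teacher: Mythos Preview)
Your proposal is correct and matches the paper's approach: the paper does not give a proof at all, only the single sentence ``Using Theorem~\ref{thm4-1}, we can rephrase the compactness theorem of Colding--Minicozzi \cite{CM2} as the following,'' and your argument is exactly the fleshing-out of that remark. Your observation that the constant in Theorem~\ref{thm-f-1} depends only on the weighted volume (hence is uniform over $\mathcal{S}(g,A)$) is the key point needed to make the reduction go through, and it is indeed borne out by the proof of Theorem~\ref{thm-f}.
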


The  second application of Theorem \ref{thm-f-1} is about gradient shrinking solitons. Recall
that a complete
 smooth Riemannian manifold $(M^n, g)$ is called a
{\it normalized gradient shrinking Ricci soliton} if there is a
smooth function $f$ on $M^n$ such that the Ricci tensor $R_{ij}$ of
the metric $g$ is given by
\begin{equation}R_{ij}+\nabla_i\nabla_jf=\frac 12 g_{ij}. \label{I-1}
\end{equation}
It is known that the scalar curvature $R$ of the metric $g$ is nonnegative \cite{MR2520796}. We get
\begin{theorem}\label{thm-shrinking-1} (Thm.\ref{thm-shrinking}) Let $(M^n, g, f)$ be a complete noncompact gradient shrinking solition with $f$ satisfying (\ref{I-1}) and $\beta=\inf R$. Then
 there exists  a  constant $C>0$ such that
$$ \Vol(B_{r}(x_0))\leq C r^{n-2\beta}$$
for $r\geq 1$, where  $C$ is a constant depending on $x_0$ and $ \Vol(B_{r}(x_0))$ denotes the volume of the geodesic ball of $M$ of radius $r$ centered at $x_0\in M$.
\end{theorem}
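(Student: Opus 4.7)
The strategy is to apply Theorem~\ref{thm-f-1} to a nonnegative translate of the soliton potential $f$, and then convert the resulting level-set bound into a geodesic-ball bound by means of the Cao--Zhou quadratic distance comparison.

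The first step is to bring $f$ into the correct normalization. Tracing (\ref{I-1}) gives $R+\Delta f=n/2$, and Hamilton's contracted Bianchi computation yields $R+|\nabla f|^{2}=f+C_{0}$ for some constant $C_{0}$. Since (\ref{I-1}) is invariant under adding a constant to $f$, I may replace $f$ by $\hat f:=f-C_{0}$, so that
$$R+|\nabla \hat f|^{2}=\hat f,\qquad R+\Delta \hat f=\tfrac{n}{2}.$$
Because $R\ge 0$ (Chen), we have $\beta=\inf R\ge 0$ and $\hat f=R+|\nabla\hat f|^{2}\ge R\ge\beta$. Define
$$\bar f:=\hat f-\beta\ge 0.$$

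The second step is to check the hypotheses of Theorem~\ref{thm-f-1} for $\bar f$. A direct computation gives
$$|\nabla\bar f|^{2}=|\nabla\hat f|^{2}=\hat f-R\le\hat f-\beta=\bar f,$$
and
$$\Delta_{\bar f}\bar f=\Delta\hat f-|\nabla\hat f|^{2}=\left(\tfrac{n}{2}-R\right)-(\hat f-R)=\tfrac{n}{2}-\hat f,$$
so that
$$\Delta_{\bar f}\bar f+\bar f=\tfrac{n}{2}-\beta=\tfrac{n-2\beta}{2}.$$
Thus both assumptions of Theorem~\ref{thm-f-1} hold with $k=(n-2\beta)/2$, provided $\bar f$ is proper. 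Properness follows from the Cao--Zhou asymptotic
$$\tfrac{1}{4}(d(x,p)-c_{1})^{2}\le\hat f(x)\le\tfrac{1}{4}(d(x,p)+c_{2})^{2},$$
valid for any minimizer $p$ of $\hat f$. Applying Theorem~\ref{thm-f-1} yields
$$V(r)=\Vol\bigl\{2\sqrt{\bar f}<r\bigr\}\le Cr^{n-2\beta},\qquad r\ge 1.$$

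The third and final step is to pass from level sets of $\bar f$ to geodesic balls. Using the upper bound of the Cao--Zhou estimate together with the triangle inequality centered at the given $x_{0}\in M$, one sees that there is a constant $C_{1}=C_{1}(x_{0})$ such that
$$B_{r}(x_{0})\subset\bigl\{2\sqrt{\bar f}<r+C_{1}\bigr\}\qquad\text{for all }r\ge 1,$$
and therefore
$$\Vol(B_{r}(x_{0}))\le V(r+C_{1})\le C(r+C_{1})^{n-2\beta}\le C'r^{n-2\beta}.$$

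The main obstacle in this route is not the abstract volume estimate itself, which is already in hand, but rather ensuring the correct normalization of $f$ (so that the Hamilton constant vanishes and the gradient bound $|\nabla\bar f|^{2}\le\bar f$ holds with precisely the right constant to produce the improved exponent $n-2\beta$), and invoking the Cao--Zhou quadratic comparison, which is what allows one to replace level sets by geodesic balls. Both are standard but non-trivial inputs from the structure theory of shrinking solitons.
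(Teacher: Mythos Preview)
Your proposal is correct and follows essentially the same route as the paper: normalize the potential so that the Hamilton constant becomes $\beta$, verify the gradient and drift-Laplacian inequalities needed for Theorem~\ref{thm-f-1} with $k=(n-2\beta)/2$, invoke the Cao--Zhou quadratic growth estimate for properness, and then use that same estimate (plus the triangle inequality) to pass from sublevel sets of the potential to geodesic balls. The paper splits this into a lemma (the level-set bound) and the theorem (the geodesic-ball bound), but the content is identical to what you wrote.
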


Since the scalar curvature $R$  is nonnegative (\cite{MR2520796}),
Theorem  \ref{thm-shrinking-1} implies the estimate of Euclidean
volume growth which has been obtained by Cao and the second author
in  \cite{MR2732975}.   The volume upper bounds of  the geodesic ball  $B_r(x_0)$ in Theorem
\ref{thm-shrinking-1} are sharp by the examples like  the Gaussian shrinker and
cylinder shrinking solitons.  After the first version was posted on arXiv, we were informed that Theorem 1.5 had been proved by Zhang \cite{Zh}. In \cite{Zh}, the author used the method  in \cite{MR2732975} to prove the result.

 It has been proved (Cao-Chen-Zhu \cite{MR2488948}), that {\sl
any 3-dimensional complete noncompact non-flat shrinking gradient
soliton is necessarily the round cylinder $\mathbb{S}^2\times
\mathbb{R}$ or one of its $\mathbb Z_2$ quotients}.  But for higher
dimensions, one has no classification of  complete noncompact
gradient shrinking solitons without any assumptions
on curvatures.

%{\bf Acknowledgements}

\section{Volume growth estimate}

In this section, we will prove Theorem \ref{thm-f-1}.   The first part of our argument is  similar to the proof of  Ding and Xin in  the case of self-shrinkers.
We have the following
\begin{theorem}\label{thm-f} (Thm.\ref{thm-f-1}) Let $(M, g)$ be a complete noncompact Riemannain manifold. Let $f$ be a  proper $C^{\infty}$ function on $M$. If $|\nabla f|^2\leq f$ on  $D_r$ for all $r$  and
\begin{equation}\Delta_f f+f
\leq k \label{V-1}
\end{equation}
for some constant $k$,
 then the weighted volume $\int_M e^{-f} dv <+\infty$ and
\begin{equation}V(r)\leq C r^{2k}  \label{V-2}
\end{equation}
for $r\geq 1$, where $C$ is a constant depending only on
$\int_M e^{-f} dv$.
\end{theorem}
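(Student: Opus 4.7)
The plan is to derive the two conclusions in sequence: first the weighted volume finiteness $V_f(M)<\infty$ via a differential inequality for $h(r):=V_f(r)$, and then the polynomial bound $V(r)\leq Cr^{2k}$ by a Gronwall argument that couples this control with the unweighted consequence $\Delta f\leq k$.

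For the finiteness, I will apply the weighted divergence theorem to $\Delta_f f + f\leq k$ on $D_r$, noting that $\int_{D_r}\Delta_f f\,e^{-f}\,dv = \int_{\partial D_r}e^{-f}|\nabla f|\,d\sigma\geq 0$, so that $\int_{D_r}f\,e^{-f}\,dv \leq kV_f(r)$. The coarea formula for $\rho=2\sqrt{f}$ (which gives $V_f'(s)=e^{-s^2/4}V'(s)$) rewrites the left side as $\int_0^r(s^2/4)h'(s)\,ds$, and integration by parts yields $(r^2/4-k)h(r)\leq H(r)$, where $H(r):=\int_0^r(s/2)h(s)\,ds$ satisfies $H'(r)=(r/2)h(r)$. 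Hence $H'/H\leq 2r/(r^2-4k)$ for $r>2\sqrt k$, integration produces $H(r)\leq C(r^2-4k)$, and so $h(r)\leq 4C$ is bounded. This gives $V_f(M)=h_\infty<\infty$.

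For the polynomial growth, the starting point is the coarea identity $V(r)=\int_0^r e^{s^2/4}h'(s)\,ds$. Integration by parts (adding and subtracting $h_\infty$) produces
\[
V(r)\;=\;h_\infty+\int_0^r\tfrac{s}{2}\,e^{s^2/4}(h_\infty-h(s))\,ds-e^{r^2/4}(h_\infty-h(r)),
\]
and the last term is non-positive. To bound the integrand I will prove two estimates. Integrating $\Delta_f f+f\leq k$ against $e^{-f}$ on the annulus $D_R\setminus D_r$ and sending $R\to\infty$ --- using that $R\mapsto e^{-R^2/4}\int_{\partial D_R}|\nabla f|\,d\sigma$ is nonincreasing for $R>2\sqrt k$, so that its nonnegative limit may simply be discarded --- gives the weighted tail bound
\[
(r^2/4-k)(h_\infty-h(r))\;\leq\; e^{-r^2/4}\int_{\partial D_r}|\nabla f|\,d\sigma.
\]
Separately, the assumption $|\nabla f|^2\leq f$ combined with $\Delta_f f+f\leq k$ forces $\Delta f=\Delta_f f+|\nabla f|^2\leq(k-f)+f=k$, and the unweighted divergence theorem yields $\int_{\partial D_r}|\nabla f|\,d\sigma\leq kV(r)$. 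Multiplying these two bounds cancels the Gaussian weights and gives $e^{s^2/4}(h_\infty-h(s))\leq 4kV(s)/(s^2-4k)$ for $s>2\sqrt k$.

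Substituting back into the identity and fixing some $r_1>2\sqrt k$, I obtain, for $r\geq r_1$,
\[
V(r)\;\leq\;C_0+\int_{r_1}^r\frac{2sk}{s^2-4k}\,V(s)\,ds,
\]
with $C_0$ depending only on $h_\infty$. Gronwall's inequality then delivers $V(r)\leq C_0\bigl((r^2-4k)/(r_1^2-4k)\bigr)^k\leq Cr^{2k}$, with $C$ depending only on $V_f(M)$; the range $r\in[1,r_1]$ is absorbed using $V(r)\leq V(r_1)<\infty$ (which is finite by properness of $f$). The step I expect to be the main obstacle is identifying the correct coupling: the weighted annulus estimate alone only yields polynomial decay of $h_\infty-h(r)$, too slow to survive the factor $e^{s^2/4}$ in the identity for $V$; it is only after multiplying by the unweighted bound $\int_{\partial D_r}|\nabla f|\,d\sigma\leq kV(r)$ that the two exponentials cancel and a self-referential Gronwall inequality for $V$ closes.
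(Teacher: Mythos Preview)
Your argument is correct, but it follows a genuinely different route from the paper's. The paper's key device is the one–parameter family
\[
I(t)\;=\;t^{-k}\int_{\bar D_r}e^{-f/t}\,dv,\qquad t\ge 1,
\]
with $r$ fixed. A single application of the divergence theorem, using $\Delta_f f+f\le k$ together with $|\nabla f|^2\le f$, shows $I'(t)\le 0$ on $[1,\infty)$. Comparing $I(r^2)\le I(1)$ yields immediately
\[
e^{-1/4}\,r^{-2k}V(r)\;\le\;\int_{\bar D_r}e^{-f}\,dv,
\]
and a short telescoping estimate on $\int_{\bar D_r}e^{-f}-\int_{\bar D_{r-1}}e^{-f}$ then gives $V_f(M)<\infty$. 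So the polynomial bound follows in one stroke with the explicit constant $C=e^{1/4}V_f(M)$.

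Your proof instead runs two separate Gronwall–type arguments: first a differential inequality for $H(r)=\int_0^r(s/2)V_f(s)\,ds$ to obtain $V_f(M)<\infty$, and then a self–referential inequality for $V(r)$, built by coupling the weighted annular estimate $(r^2/4-k)(h_\infty-h(r))\le e^{-r^2/4}\int_{\partial D_r}|\nabla f|$ with the unweighted bound $\int_{\partial D_r}|\nabla f|\le kV(r)$ to kill the Gaussian weight. This is a nice and more elementary alternative, and your identification of the cancellation of the $e^{\pm r^2/4}$ factors as the crux is exactly right.

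One shortcoming worth noting: your final constant depends on $k$ as well as on $V_f(M)$ (through the choice of $r_1>2\sqrt{k}$, the bound $C_0\le h_\infty\,e^{r_1^2/4}$, and the Gronwall factor $(r_1^2-4k)^{-k}$), whereas the statement asserts --- and the paper's monotonicity argument delivers --- a constant depending \emph{only} on $V_f(M)$. So your method gives a slightly weaker conclusion, though it is perfectly adequate for the applications in the paper.
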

\begin{proof} Since $f$ is proper, it is well defined  the integral $$I(t)=\frac1{t^k}\int_{\bar{D}_r}e^{-\frac{f}{t}}dv, \quad t>0.$$
\begin{equation}I'(t)=t^{-k-1}\int_{\bar{D}_r}(-k+\frac{f}{t})e^{-\frac{f}{t}}dv. \label{V-3}
\end{equation}
On the other hand,
\begin{align}
\int_{\bar{D}_r}\text{div}(e^{-\frac{f}{t}}\nabla f)dv=&\int_{\bar{D}_r}e^{-\frac ft}(\Delta f-\frac 1t|\nabla f|^2)dv\nonumber\\
\leq&\int_{\bar{D}_r}e^{-\frac ft}(|\nabla f|^2-f+k-\frac 1t|\nabla f|^2)dv\nonumber\\
=&\int_{\bar{D}_r}e^{-\frac ft}(\frac{t-1}{t}|\nabla f|^2-f+k)dv \nonumber\\
\leq &\int_{\bar{D}_r}e^{-\frac ft}(\frac{t-1}{t}f-f+k)dv, \quad t\geq 1\nonumber\\
=&\int_{\bar{D}_r}e^{-\frac ft}(-\frac{1}{t}f+k)dv,  \label{V-4}
\end{align}
Substituting (\ref{V-4}) into (\ref{V-3})  gives
$$I'(t)\leq -t^{-k-1}\int_{\bar{D}_r}\text{div}(e^{-\frac{f}{t}}\nabla
f)dv.$$
 At the regular value $r$ of $f$ and for $t\geq 1$, by
Stokes' Theorem, we have
\begin{align*}
I'(t)\leq &-t^{-k-1}\int_{\partial D_r}\left<e^{-\frac{f}{t}}\nabla f, \frac{\nabla f}{|\nabla f|}\right> dv\\
=&-t^{-k-1}\int_{\partial D_r}e^{-\frac{f}{t}}|\nabla f| dv\leq 0.
\end{align*}
Integrating $I'(t)$ over $t$  from $1$ to $r^2>1$, we get
\begin{equation}
r^{-2k}\int_{\bar{D}_r}e^{-\frac{f}{r^2}}dv\leq \int_{\bar{D}_r}e^{-f}dv. \label{V-5}
\end{equation}
Since the integrals in (\ref{V-5}) are upper semi-continuous for all
$r$, $(\ref{V-5})$ holds for all $r\geq1$. Note $2\sqrt{f}\leq r$
over $\bar{D}_r$. (\ref{V-5}) implies for all $r\geq 1$
\begin{align}
e^{-\frac14}r^{-2k}\int_{\bar{D}_r}dv&\leq r^{-2k}\int_{\bar{D}_r}e^{-\frac{f}{r^2}}dv\nonumber\\
&\leq \int_{\bar{D}_r}e^{-f}dv.\label{V-5-1}
\end{align}
Note that
\begin{align}
\int_{\bar{D}_r}e^{-f}dv-\int_{\bar{D}_{r-1}}e^{-f}dv&= \int_{\bar{D}_r\backslash \bar{D}_{r-1}}e^{-f}dv\nonumber\\
&\le e^{-\frac{(r-1)^2}{4}} \int_{\bar{D}_r\backslash \bar{D}_{r-1}}dv\nonumber\\
&\leq e^{-\frac{(r-1)^2}{4}}\int_{\bar{D}_r}dv.\label{V-6-1}
\end{align}
By (\ref{V-5-1}) and (\ref{V-6-1}),  we have
\begin{equation}\int_{\bar{D}_r}e^{-f}dv-\int_{\bar{D}_{r-1}}e^{-f}dv\leq e^{\frac 14}r^{2k}e^{-\frac{(r-1)^2}{4}}\int_{\bar{D}_r}e^{-f}dv\label{V-6}.
\end{equation}
We can choose a constant $r_0$ such that  $e^{\frac 14}r^{2k}e^{-\frac{(r-1)^2}{4}}<e^{-r}$,  for $r>r_0$. Then (\ref{V-6}) implies
 \begin{equation}\int_{\bar{D}_r}e^{-f}dv\leq \frac1{1-e^{-r}}\int_{\bar{D}_{r-1}}e^{-f}dv. \label{V-7}
 \end{equation}
 Then for any positive integer $N$, we have
 \begin{equation}\int_{\bar{D}_{r+N}}e^{-f}dv\leq \left( \prod_{i=0}^N\frac1{1-e^{-(r+i)}}\right)\int_{\bar{D}_{r-1}}e^{-f}dv<\infty. \label{V-8}
 \end{equation}
 This implies that $\int_Me^{-f}dv<+\infty$.
Moreover by (\ref{V-5-1}),   $$e^{-\frac14}r^{-2k}\int_{\bar{D}_r}dv\leq\int_{\bar{D}_r}e^{-f}dv\leq\int_Me^{-f}dv. $$
So for all $r\geq 1$,  $V(r)\leq Cr^{2k}.$
\end{proof}

\section{Application to self-shrinkers}
In this section,  we   discuss   the volume of self-shrinkers of  mean curvature flow by using Thoerem \ref{thm-f}.
Let $\Sigma$ be an  $n$-dimensional submanifold isometricly immersed  in $\mathbb{R}^{n+p}$. Let $A(V,W)=\overline{\nabla}_VW-\nabla_VW, V,W\in T\Sigma$ denote the second fundamental form and $\mathbf{H}$ denote the mean curvature vector of $\Sigma$ given by $\mathbf{H}=\sum_{i=1}^nA(e_i, e_i)$, where $\{e_i\}$ is a local orthonormal frame field of $\Sigma$.
$\Sigma$ is said to be a self-shrinker in $\mathbb{R}^{n+p}$ if it satisfies
\begin{equation}\label{P-8}
\mathbf{H}=-\frac{x^N}{2},
\end{equation}
where $x$ denotes the position vector, $(\cdot)^N$ denotes the orthogonal projection into the normal bundle $N\Sigma$ of $\Sigma$.

Let $f(x)=\frac{|x|^2}{4}$. Then  $\bar{\nabla} f=\frac x2,  |\bar{\nabla} f|^2=\frac{|x|^2}4$,
\begin{equation}
f-|\nabla f|^2=f- |\bar{\nabla} f|^2+|(\bar{\nabla}f)^N|^2=\frac{|x^N|^2}4=|{\bf H}|^2\geq 0, \label{S-2}
\end{equation}
\begin{equation}\label{S-3}
\begin{split}
\Delta_\Sigma f&=\frac{1}{4}\sum_{i=1}^n\bar{D}^2(|x|^2)(e_i, e_i)+\frac 14\langle {\bf H}, \bar\nabla |x|^2\rangle\\
&=\frac{n}{2}-\frac 14\langle x^N, x\rangle^2\\
&=\frac {n}{2}-f+|\nabla f|^2.
\end{split}
\end{equation}
Denote  by  $B_r(0)=\{x\in \mathbb{R}^{n+1}; |x|<r\}$ the ball in $\mathbb{R}^{n+1}$ of radius $r$ centered at  the origin.
\begin{theorem} \label{thm-S}  Let $\Sigma^n$ be a complete noncompact properly immersed self-shrinker  in Euclidian space $\mathbb{R}^{n+p}$ with nonnegative constant $\beta\leq\inf |{\bf H}|^2$.  Then,
there is a positive constant $C$ such that for $r\geq 1$
$$ \Vol( B_{r}(0)\cap \Sigma)\leq C r^{n-2\beta}.$$
\end{theorem}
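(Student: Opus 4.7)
The plan is to apply Theorem \ref{thm-f} to a suitably shifted version of $f = |x|^2/4$ on $\Sigma$. The naive choice $f = |x|^2/4$ gives $k = n/2$ (using (\ref{S-2}) and (\ref{S-3}) below) and so only yields $V(r) \leq C r^n$. To capture the improved exponent $n - 2\beta$, the idea is to subtract $\beta$ from $f$ and exploit the pointwise lower bound $|\mathbf H|^2 \geq \beta$ throughout.

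Concretely, set $\tilde f := \frac{|x|^2}{4} - \beta$. First I would verify that $\tilde f$ satisfies the hypotheses of Theorem \ref{thm-f}. Since $\Sigma$ is properly immersed, $|x|^2$ is proper on $\Sigma$, hence so is $\tilde f$. The inequality $|\mathbf H|^2 = |x^N|^2/4 \geq \beta$ combined with $|x^N|^2 \leq |x|^2$ gives $\tilde f \geq 0$. From (\ref{S-2}) one has
\[
\tilde f - |\nabla \tilde f|^2 = \tfrac{|x|^2}{4} - \beta - |\nabla f|^2 = |\mathbf H|^2 - \beta \geq 0,
\]
so $|\nabla \tilde f|^2 \leq \tilde f$ holds on all of $\Sigma$. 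Next, using (\ref{S-3}),
\[
\Delta_{\tilde f}\tilde f = \Delta_\Sigma f - |\nabla f|^2 = \tfrac{n}{2} - f = \tfrac{n}{2} - \beta - \tilde f,
\]
so $\Delta_{\tilde f}\tilde f + \tilde f = \tfrac{n}{2} - \beta =: k$. Thus all hypotheses of Theorem \ref{thm-f} are satisfied.

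Applying Theorem \ref{thm-f} to $\tilde f$ yields $\Vol(\tilde D_r) \leq C r^{2k} = C r^{n-2\beta}$ for $r \geq 1$, where
\[
\tilde D_r = \{x \in \Sigma : 2\sqrt{\tilde f} < r\} = \{x \in \Sigma : |x|^2 < r^2 + 4\beta\}.
\]
Since $\beta \geq 0$, we have the inclusion $B_r(0) \cap \Sigma \subset \tilde D_r$, and therefore
\[
\Vol(B_r(0) \cap \Sigma) \leq \Vol(\tilde D_r) \leq C r^{n - 2\beta}
\]
for all $r \geq 1$, which is the desired estimate.

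The computation itself is essentially routine once the correct shift is identified; the only real subtlety is recognizing that the lower bound $|\mathbf H|^2 \geq \beta$ must be absorbed into $f$ as a constant shift rather than treated as a separate perturbation. I do not expect any analytic obstacle, since the properness of the immersion gives properness of $\tilde f$ for free, and Theorem \ref{thm-f} has been established in full generality.
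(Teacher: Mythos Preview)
Your proof is correct and follows essentially the same approach as the paper: shift $f=|x|^2/4$ by $\beta$, verify the hypotheses of Theorem~\ref{thm-f} using (\ref{S-2}) and (\ref{S-3}), and then absorb the resulting sublevel set $\tilde D_r$ into the extrinsic ball via the inclusion $B_r(0)\cap\Sigma\subset\tilde D_r$. The paper's argument is identical up to notation.
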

\begin{proof}
By (\ref{S-2}),  $\inf \{f-|\nabla f|^2\}=\inf |{\bf H}|^2\geq\beta\geq 0$.  Take $\bar{f}=f-\beta.$ Then $$\bar{f}-|\bar{\nabla} f|^2=f-|\nabla f|^2-\beta\geq 0.$$
$$\Delta_\Sigma \bar{f}=\frac n2-\beta-\bar{f}+|\nabla\bar{f}|^2.$$
 The properness of the immersion of $\Sigma$ is the properness of $f$ on $\Sigma$ and hence $\bar{f}$ is proper.  The above implies $\bar{f}$ satisfies the assumption of Theorem \ref{thm-f}. Denote $$D_r=\{x\in \Sigma: 2\sqrt{\bar{f}}< r\}$$
 Then $$D_r=\{x\in \Sigma: |x|<  \sqrt{r^2+4\beta}\}.$$ By Theorem \ref{thm-f},
 $$\text{Vol}(D_r)\leq Cr^{n-2\beta},$$
 that is $$\text{Vol}( B_{ \sqrt{r^2+4\beta}}(0)\cap \Sigma)\leq C r^{n-2\beta}.$$
 So $$ \Vol( B_{r}(0)\cap \Sigma)\leq C r^{n-2\beta}.$$
 \end{proof}
In particular, if   $\Sigma^n$ be a self-shrinker in  $\mathbb{R}^{n+1}$. Since ${\bf H}=-H{\bf n}$,  where  $H=-\left<\nabla_{e_i}e_i,\bf{n}\right>$ and $\bf{n}$ denote the mean curvature and the outward unit normal respectively,  $\Sigma$ satisfies
\begin{equation}H=\frac{\left<x, {\bf n}\right>}{2}, \label{S-1}
\end{equation}
So we have Theorem \ref{thm-S-2-1}.
 \begin{remark}  Take cylinder self-shrinker  $S^{k}(\sqrt{2k})\times \mathbb{R}^{n-k}$,   $0\leq k\leq n$.  $H=\frac{\sqrt{k}}{\sqrt{2}},$  $n-2\beta=n-k$. Theorem \ref{thm-S} implies $ \Vol( B_{r}(0)\cap \Sigma)\leq C r^{n-k},$ which is sharp. The constant $C$ in theorem depends only on the integral $\int_\Sigma e^{-\frac{|x|^2}{4}}dv<\infty$ and $\beta$ explicitly as seen in the
proof. Take $\beta=0$ in Theorem \ref{thm-S}. We have the Euclidean growth upper bound proved by Ding and Xin \cite{DX}.
 \end{remark}

\section{Properness,  polynomial volume growth and  finite weighted volume for self-shrinkers}\label{S-5}

In this section, we discuss the equivalence of properness,
polynomial volume growth and  finite weighted volume for complete
self-shrinkers.  If $\Sigma$ is an $n$-dimensional submanifold in $\mathbb{R}^{n+p},  p\geq 1$,  $\Sigma$ is said to have polynomial volume growth  if there exist constants $C$ and $d$ so that for all $r\geq 1$
\begin{equation} \Vol (B_r(0)\cap \Sigma)\leq Cr^d. \label{P-1-1}
\end{equation}
When $d=n$ in (\ref{P-1-1}), $\Sigma$ is said to has Euclidean volume growth.
We get the following
\begin{theorem}\label{thm5}   For any complete $n$-dimensional immersed self-shrinker $\Sigma^n$ in $\mathbb{R}^{n+p}, p\geq 1$, the following statements are equivalent,
\begin{enumerate}
\item  [(i)]  $\Sigma$ is proper;
\item  [(ii)] $\Sigma$ has  Euclidean volume growth; 
\item  [(iii)] $\Sigma$ has  polynomial volume growth;
\item  [(iv)] $\Sigma$ has  finite weighted volume, that is, $\int_\Sigma e^{-\frac{|x|^2}{4}}dv<\infty.$
\end{enumerate}
\end{theorem}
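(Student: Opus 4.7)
My plan is to establish the equivalence by closing the cycle $(i) \Rightarrow (ii) \Rightarrow (iii) \Rightarrow (iv) \Rightarrow (i)$. The implication $(i) \Rightarrow (ii)$ is precisely Theorem \ref{thm-S} with $\beta = 0$; $(ii) \Rightarrow (iii)$ is tautological. For $(iii) \Rightarrow (iv)$ I would split $\Sigma$ into the extrinsic annuli $A_k = \Sigma \cap (B_{k+1}(0) \setminus B_k(0))$, $k \ge 1$, bound the Gaussian weight by $e^{-k^2/4}$ on $A_k$, and combine with $\Vol(A_k) \le C(k+1)^d$; since $\sum_k (k+1)^d e^{-k^2/4} < \infty$, the weighted volume of $\Sigma$ is finite.

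For the substantive direction $(iv) \Rightarrow (i)$ I would argue by contradiction. If $\Sigma$ is not proper, there exists $R > 0$ such that $\Sigma \cap \overline{B_R(0)}$ (interpreted as the preimage under the immersion) is not compact, so by Hopf--Rinow one obtains a sequence $\{p_j\}$ in this set with $d_\Sigma(p_j, p_0) \to \infty$ for a fixed base point $p_0$. After extracting a subsequence along which the intrinsic distance to $p_0$ grows by more than $2s_0$ at each step, the $p_j$ become pairwise separated by at least $2 s_0$ in $\Sigma$, so the intrinsic balls $B_{s_0}^\Sigma(p_j)$ are disjoint. Because the immersion is $1$-Lipschitz in the induced metric, each such ball sits inside $\Sigma \cap \overline{B_{R+s_0}(0)}$, a region on which the shrinker equation $\mathbf{H} = -x^N/2$ forces $|\mathbf{H}|^2 \le (R + s_0)^2 / 4$.

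The key quantitative step is a uniform lower bound $\Vol(B_{s_0}^\Sigma(p_j)) \ge c(n,R) > 0$ at some small scale $s_0 = s_0(n,R)$, and this is where the main obstacle lies: without any control on the second fundamental form we cannot invoke classical Riemannian volume comparison. I would instead apply the Michael--Simon Sobolev inequality for submanifolds of Euclidean space, whose constants depend only on $n$ and on the sup of $|\mathbf{H}|$. Its isoperimetric form, applied to $\Omega = B_s^\Sigma(p_j)$, gives the differential inequality
\[
V(s)^{(n-1)/n} \;\le\; C_n \bigl(V'(s) + H_0 V(s)\bigr)
\]
for $V(s) = \Vol(B_s^\Sigma(p_j))$ and $H_0 = (R+1)/2$; integrating up to a threshold $s_0 \sim 1/H_0$ then produces the desired uniform positive lower bound.

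Combining these ingredients, the infinitely many disjoint balls $B_{s_0}^\Sigma(p_j)$ all lie in $\Sigma \cap \overline{B_{R+s_0}(0)}$, where $e^{-|x|^2/4} \ge e^{-(R+s_0)^2/4}$, so
\[
\int_\Sigma e^{-|x|^2/4}\,dv \;\ge\; e^{-(R+s_0)^2/4} \sum_j \Vol(B_{s_0}^\Sigma(p_j)) \;=\; +\infty,
\]
contradicting (iv). Therefore $\Sigma$ must be proper, closing the cycle.
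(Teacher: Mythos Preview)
Your proof is correct and follows the same cycle $(i)\Rightarrow(ii)\Rightarrow(iii)\Rightarrow(iv)\Rightarrow(i)$ as the paper, with the same contradiction setup for $(iv)\Rightarrow(i)$: a non-proper immersion yields infinitely many intrinsically separated points inside a fixed extrinsic ball, disjoint intrinsic balls around them lie in a slightly larger extrinsic ball where $|\mathbf{H}|$ is bounded, and a uniform lower volume bound on these balls forces the weighted volume to diverge.

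The only genuine difference is how you obtain the lower volume bound $\Vol(B^\Sigma_{s_0}(p_j))\ge c(n,R)>0$. You invoke the Michael--Simon isoperimetric inequality to get $V(s)^{(n-1)/n}\le C_n(V'(s)+H_0V(s))$ and integrate the resulting ODE. The paper instead argues directly from the identity $\Delta_\Sigma r_k^2 = 2n + 2\langle \mathbf{H}, x-p_k\rangle \ge 2n - 2Rr_k$ for the \emph{extrinsic} distance $r_k=|x-p_k|$; integrating over intrinsic balls and using Stokes together with the coarea formula yields $(n-R\mu)V(\mu)\le \mu V'(\mu)$, which integrates to the explicit bound $V(\mu)\ge \omega_n\,\mu^n e^{-R\mu}$. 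The paper's route is more elementary and self-contained (no external inequality needed, and it gives a sharp constant $\omega_n$ as $R\to 0$), while your route is shorter to state once Michael--Simon is granted and makes clear that only a mean-curvature bound, not the specific shrinker structure, is being used at this step.
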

\begin{proof}$(i)\Rightarrow (ii)$ has been proved in Theorem \ref{thm-S}. $(ii)\Rightarrow (iii)$ and $(iii)\Rightarrow (iv)$ are obvious.  So we only  need to
prove that $(iv)$ implies $(i)$. By contrary, if $\Sigma$ is not properly
immersed, there exists a number $R>0$ such that the pre-image
$E\subseteq \Sigma$ of $\bar{B}_R$ is not compact in $\Sigma$. Then for a positive constant $a>0$, there exists a
sequence $\{p_k\}$ of points in $E$ with $d_\Sigma(p_k,p_j)\geq a>0$ for
any $k\neq j$, where $d_\Sigma$ denotes
the intrinsic distance in $\Sigma$.  Denote by $B^{\Sigma}_{r}(p)$
the geodesic ball of $\Sigma$ of radius $r$  centered at  $p\in \Sigma$.  So
$B^{\Sigma}_{\frac a2}( p_k)\cap B^{\Sigma}_{\frac a2}( p_j)=\emptyset $ for any $k\neq j$.  Choose $a<2R$. Then $B^{\Sigma}_{\frac a2}( p_k)\subset B_{2R}$. Since
\begin{equation} {\bf H}=-\frac{x^N}{2}, \label{P-1}
\end{equation}
for any $p\in \Sigma\cap B_{2R}$,
\begin{equation}|{\bf H}|(p)\leq \frac{|x|}{2}\leq \frac {2R}{2}= R.\label{P-2}
\end{equation}
 We claim that there is a constant $C$ depending only on $R$ such that $\text{vol}(B^{\Sigma}_{\frac a2}( p_k))\geq Ca^n$ when $0<a\le\min \{2R, \frac{n}{2R}\}$ is suitably small.
 
  In fact for $p\in B^{\Sigma}_{\frac a2}( p_k))$, the extrinsic distance function $r_k(p):=\textrm{d}(p, p_k)$ from  $p_k$ satisfies
 \begin{align}
 \Delta_\Sigma r_k^2&=2n+2\left<{\bf{H}}, r_k\bar\nabla r_k\right>\nonumber \\
 &\ge 2n-2|{\bf H}|r_k\nonumber\\
 &\ge 2n-2r_kR. \label{P-3}
 \end{align}
 By $a\le \frac{n}{2R}$,  we have for $0<\mu\le \frac{a}2$
 \begin{align}
 \int_{B^{\Sigma}_{\mu}( p_k)} (2n-2Rr_k)dv&\le  \int_{B^{\Sigma}_{\mu}( p_k)}     \Delta_\Sigma r_k^2dv\nonumber\\
 &=  \int_{\partial B^{\Sigma}_{\mu}( p_k)}    \left< \nabla^{\Sigma} r_k^2, \nu\right >dv\nonumber\\
 &\le 2\mu A(\mu), \label{P-4}
 \end{align}
 where $\nu$ denotes the  outward unit normal vector of  $\partial B^{\Sigma}_{\mu}( p_k)$ and $A(\mu)$ denotes the area of $\partial B^{\Sigma}_{\mu}( p_k)$. Then using co-area formula in (\ref{P-4}), we get
\begin{equation}\int_0^\mu (n- Rs)A(s)ds\le \int_0^{\mu}\int_{d_{\Sigma}(p,p_k)=s}(n-Rr_k)d\sigma\le\mu A(\mu). \label{P-5}
\end{equation}
This implies
\begin{equation}(n-R\mu)V(\mu)\le \mu V'(\mu)\nonumber,
\end{equation}
where $V(\mu)$ denotes the volume of $B^{\Sigma}_{\mu}( p_k)$. So
\begin{equation}\frac{V'(\mu)}{V(\mu)}\geq \frac{n}{\mu}- R \label{P-6}
\end{equation}
Integrating (\ref{P-6}) from $\epsilon>0$ to $\mu$,  we have
\begin{equation}\frac{V(\mu)}{V(\epsilon)}\geq  (\frac{\mu}{\epsilon})^ne^{-R(\mu-\epsilon)}\nonumber
\end{equation}
Since
$\displaystyle\lim_{s\rightarrow 0^+}\frac{V(s)}{s^{n}}=\omega_{n}$,
\begin{equation}\label{P-7}V(\mu)\ge \omega_{n}\mu^ne^{- R\mu}.
\end{equation}
The  claim follows from (\ref{P-7}).
  Now  we have
\begin{equation}\label{eqn6}
\begin{split}
                   \int_\Sigma e^{-\frac{|x|^2}{4}}dv&\geq
\displaystyle\sum_{k=1}^{\infty}\int_{B^{\Sigma}_{\frac a2}( p_k))}e^{-\frac{|x|^2}{4}}dv\\
&\geq e^{-R^2}
 \displaystyle\sum_{k=1}^{\infty}\int_{B^{\Sigma}_{\frac a2}( p_k)}dv=+\infty.
                 \end{split}
 \end{equation}
It is a contradiction with the hypothesis of the finiteness of the weighted volume.
\end{proof}

Take $p=1$ in Theorem \ref{thm5}, we get Theorem \ref{thm4-1}.

\begin{remark} It was asked by Huai-Dong Cao if a complete
self-shrinker has polynomial volume growth. If there is any
example of nonproperly immersed self-shrinker, it must have
infinite weighted volume.  We do not know any such
examples.

\end{remark}

\section{Application to gradient shrinking Solitons}
In this section, we consider gradient shrinking solitons  of Ricci flows.
A  complete $n$-dimensional smooth
Riemannian   manifold $(M, g)$ is  called a
gradient shrinking Ricci soliton  if there exists a smooth
function $f$ on $M^n$ such that the Ricci tensor $R_{ij}$ of the
metric $g$ is given by
$$R_{ij}+\nabla_i\nabla_jf=\rho g_{ij}$$
for some positive constant $\rho>0$.  By
scaling $g_{ij}$ one can normalize $\rho=\frac{1}{2}$ so that
\begin{equation}R_{ij}+\nabla_i\nabla_jf=\frac{1}{2} g_{ij}.\label{G-1}
\end{equation}
Take trace in (\ref{G-1}), we have
$$R+\Delta f=\frac{n}{2}.$$
It  was proved by   Hamilton \cite{MR1375255}  that there is a
constant $C_0$ such that the scalar curvature $R$ and $f$ satisfy
\begin{equation}R+|\nabla f|^2-f=C_0.
\end{equation}
Since B.L. Chen  \cite{MR2520796} proved a complete gradient shrinking soliton has nonnegative scalar curvature (cf. Proposition 5.5 \cite{Cao09}), we can choose a new $f$,  still denoted by $f$,  such that $C_0$ is equal to the infimum of $R$.  Denote by $\beta=\inf R$. Hence for this $f$,  $0\leq |\nabla f|^2\leq f$ and thus $(M, g)$ is a gradient shrinking Ricci soliton with nonnegative potential function $f$ and
\begin{equation}
\Delta f-|\nabla f|^2+f=\frac{n}{2}-\beta.\label{G-2}
\end{equation}
 Using Theorem \ref{thm-f}, we get the following

\begin{lemma}\label{lem} Let $(M^n, g, f)$ be a complete noncompact gradient shrinking solition with $f$ satisfying (\ref{G-1}) and (\ref{G-2}) with $\beta=\inf R$. Then
$\int_M e^{-f} dv<\infty$ and
$$V(r)\leq C r^{n-2\beta}$$
for $r\geq 1$, where $C$ is a constant depending only on
$\int_M e^{-f} dv$.
\end{lemma}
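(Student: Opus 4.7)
The plan is to recognize this lemma as a direct application of Theorem \ref{thm-f} once the three hypotheses on $f$ are verified, so the work reduces to translating the soliton identities into the form required by that theorem and citing the known properness of $f$ on a shrinker.

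First I would compute $\Delta_f f + f$. By definition of $\Delta_f$ we have $\Delta_f f = \Delta f - |\nabla f|^2$, and equation (\ref{G-2}) gives
\begin{equation*}
\Delta_f f + f = \Delta f - |\nabla f|^2 + f = \frac{n}{2} - \beta,
\end{equation*}
so hypothesis (\ref{V-1}) of Theorem \ref{thm-f} holds with $k = \frac{n}{2} - \beta$ (in fact with equality). Next I would verify the pointwise bound $|\nabla f|^2 \le f$: using Hamilton's identity $R + |\nabla f|^2 - f = C_0$ together with the normalization $C_0 = \beta = \inf R$ chosen just before the lemma, one has
\begin{equation*}
|\nabla f|^2 - f = \beta - R \le 0
\end{equation*}
on all of $M$, since $R \ge \inf R = \beta$ by Chen's theorem. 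This is stronger than the assumption ``$|\nabla f|^2 \le f$ on each $D_r$'' required by Theorem \ref{thm-f}.

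The third ingredient is the properness of $f$. This does not follow from $|\nabla f|^2 \le f$ alone, which only yields that $\sqrt{f}$ is $\tfrac12$-Lipschitz and hence an upper bound $f(x) \le \tfrac14(r(x)+c)^2$; for properness one also needs a matching lower bound $\sqrt{f(x)} \ge \tfrac12 r(x) - c'$ for $r(x) = d(x,x_0)$ large. Here I would invoke the well-known quadratic-growth estimate of Cao and the second author (the same reference \cite{MR2732975} cited above for Euclidean volume growth), which provides exactly this lower bound on any complete noncompact shrinking soliton. Once $f$ is known to be proper, all hypotheses of Theorem \ref{thm-f} hold with $k = \frac{n}{2} - \beta$, so that theorem yields $\int_M e^{-f}\,dv < \infty$ and $V(r) \le C r^{n-2\beta}$ for $r \ge 1$, which is the statement of the lemma.

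The only non-routine step is the properness of $f$, and that is the main obstacle, but it is already available in the literature; the rest is a mechanical check of the identities defining a normalized gradient shrinking Ricci soliton. I note that the lemma is stated in terms of $V(r) = \Vol(D_r)$ (the sublevel set of $2\sqrt{f}$), not geodesic balls, so no additional comparison between $D_r$ and $B_r(x_0)$ is needed here; that conversion is reserved for the proof of Theorem \ref{thm-shrinking-1} and uses the quadratic growth of $f$ in the other direction.
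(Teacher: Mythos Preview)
Your proposal is correct and follows essentially the same route as the paper: verify the hypotheses of Theorem \ref{thm-f} with $k=\tfrac{n}{2}-\beta$, and obtain properness of $f$ from the quadratic-growth estimate of Cao--Zhou \cite{MR2732975}. The paper's proof is terser only because the identities $|\nabla f|^2\le f$ and $\Delta_f f+f=\tfrac{n}{2}-\beta$ were already recorded in the discussion immediately preceding the lemma, so the proof there reduces to the single sentence ``it suffices to show the properness of $f$'' followed by the citation; one minor quibble is that $R\ge\inf R=\beta$ is a tautology rather than a consequence of Chen's theorem, which is instead used to ensure $\beta\ge 0$.
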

\begin{proof} It suffices to show the properness of $f$. Take $\bar{f}=f+\beta$. In \cite{MR2732975},  it is proved that
the
potential function $\bar{f}$ satisfies for $r$ sufficiently large
\begin{equation}\frac{1}{4} (r(x)-c_1)^2\leq \bar{f}(x)\leq \frac{1}{4} (r(x)+c_2)^2,\label{G-2-2}
\end{equation}
where $r(x)=d(x_0, x)$ is the distance function from some fixed
point $x_0\in M$, $c_1$ and $c_2$ are positive constants depending
only on $n$ and the geometry of $g_{ij}$ on the unit ball
$B_{x_0}(1)$. Hence $\bar{f}$ is proper and thus $f$ is also.
\end{proof}
\begin{theorem}\label{thm-shrinking} (Thm.\ref{thm-shrinking-1}) Let $(M^n, g, f)$ be a complete noncompact gradient shrinking solition with $f$ satisfying (\ref{G-1}) and $\beta=\inf R$. Then
 there exists  a  constant $C>0$ such that
$$ \Vol(B_{r}(x_0))\leq C r^{n-2\beta}$$
for $r\geq 1$, where  $C$ is a constant depending on $x_0$ and $ \Vol(B_{r}(x_0))$ denotes the volume of the geodesic ball of $M$ of radius $r$ centered at $x_0\in M$.
\end{theorem}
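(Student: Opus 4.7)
The plan is to reduce the geodesic-ball statement to the level-set estimate already furnished by Lemma \ref{lem}, using the Cao--Zhou comparison between the potential function $\bar{f}$ and the distance function $r(x)=d(x_0,x)$ that was quoted as (\ref{G-2-2}).

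First, I would normalize as in the discussion preceding Lemma \ref{lem}: replace $f$ by $f+\beta$ (still denoted $\bar{f}$) so that $\bar{f}$ satisfies the equation $\Delta\bar{f}-|\nabla\bar{f}|^{2}+\bar{f}=\tfrac{n}{2}-\beta$ together with $|\nabla \bar{f}|^{2}\leq \bar{f}$, and apply Lemma \ref{lem} with $k=\tfrac{n}{2}-\beta$. This gives the level-set volume bound
\begin{equation*}
V(r)=\Vol\bigl(\{x\in M:2\sqrt{\bar{f}(x)}<r\}\bigr)\leq Cr^{n-2\beta},\qquad r\geq 1,
\end{equation*}
with $C$ depending only on $\int_M e^{-\bar f}dv$.

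Next, I would translate this into a statement about the geodesic ball $B_r(x_0)$. The key ingredient is the two-sided estimate (\ref{G-2-2}): for $r(x)$ sufficiently large,
\begin{equation*}
\tfrac{1}{4}(r(x)-c_1)^{2}\leq \bar{f}(x)\leq \tfrac{1}{4}(r(x)+c_2)^{2},
\end{equation*}
with $c_1,c_2$ depending only on $n$ and the geometry of $g$ on $B_{x_0}(1)$. The upper bound gives $2\sqrt{\bar{f}(x)}\leq r(x)+c_2$, hence whenever $x\in B_r(x_0)$ (with $r$ large enough that (\ref{G-2-2}) applies) we have $2\sqrt{\bar{f}(x)}<r+c_2$, i.e.\ $B_r(x_0)\subset D_{r+c_2}$ in the notation of Lemma \ref{lem}.

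Combining these two facts yields, for $r$ large,
\begin{equation*}
\Vol(B_r(x_0))\leq V(r+c_2)\leq C(r+c_2)^{n-2\beta}\leq C' r^{n-2\beta},
\end{equation*}
where the last inequality absorbs the additive constant into the multiplicative constant (valid for $r\geq 1$, adjusting $C'$ to depend on $x_0$ through $c_2$ and the geometry near $x_0$). For the remaining bounded range of $r$ the inequality is trivial after enlarging $C'$. The only substantive step is invoking (\ref{G-2-2}); everything else is a cosmetic change of variables. Since (\ref{G-2-2}) is the quantitative form of the properness statement already used in Lemma \ref{lem} and is recorded in \cite{MR2732975}, no new analytic work is needed here, which is why the argument is essentially a corollary of Lemma \ref{lem}.
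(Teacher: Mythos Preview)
Your approach is essentially identical to the paper's: normalize the potential by an additive constant so that Lemma \ref{lem} applies with $k=\tfrac{n}{2}-\beta$, then use the Cao--Zhou estimate (\ref{G-2-2}) to obtain $B_r(x_0)\subset D_{r+c}$ and absorb the shift into the constant. One small bookkeeping slip: the shift that puts the Hamilton constant equal to $\beta$ (so that (\ref{G-2}) holds) is $f\mapsto f+C_0-\beta$, not $f\mapsto f+\beta$; with your shift the drift equation would read $\tfrac{n}{2}-C_0+\beta$ rather than $\tfrac{n}{2}-\beta$, but this is purely cosmetic and does not affect the argument.
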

\begin{proof} Take $\tilde{f}=f+\beta-C_0$. Then $\tilde{f}$ satisfies Lemma \ref{lem}.  So let $D_r=\{x\in M: 2\sqrt{\tilde{f}}< r\}$. Then $$V(D_r)\leq Cr^{n-2\beta}$$ for $r\geq 1.$  By (\ref{G-2-2}), it is easy to know $B_{r}(x_0)\subseteq D_{r+c}$ for some constant $c$ and sufficiently large $r$.
\end{proof}
\begin{remark}  The volume upper bounds of the level set $D_r$ and the geodesic ball  $B_r(x_0)$ in Theorem \ref{thm-shrinking} are optimal. For example  the Gaussian shrinker, namely the flat
Euclidean space $(\mathbb R^n, g_0)$ with the potential function
$f=|x|^2/4$ and cylinder shrinking solitons $(S^{n-k}\times \mathbb{R}^k, g)$,   $k\geq 1$,  where $g=2(n-k-1)g_{S^{n-k}}+g_{\mathbb{R}^k}$ and $f (\theta, x)=\frac{|x|^2}{4},  \theta\in S^{n-k}, x\in \mathbb{R}^k.$ 
\end{remark}
\begin{remark}Since $\beta=\inf R\geq 0$, Theorem \ref{thm-shrinking} implies the Euclidean volume growth estimate of Cao and the second author \cite{MR2732975}. The method of the proof presented in Lemma \ref{lem} is different .
\begin{remark}By \cite{MR2732975}  the equation of shrinking Ricci soliton gives  the estimate of potential function, we can use the finiteness of weighted volume to conclude   that   the
fundamental group $\pi_1(\Sigma)$ of $\Sigma$ is finite, which was proved by W. Wylie \cite{MR2373611}. But for a self-shrinker, the potential function comes from the restriction of metric  of ambient space. We cannot use it to study the fundamental group of  self-shrinker.
\end{remark}

\end{remark}

\begin{bibdiv}
\begin{biblist}

\bib{Cao09}{article}{
   author={Cao, Huai-Dong},
   title={Geometry of complete gradient shrinking Ricci solitons},
   conference={
      title={Geometry and analysis. No. 1},
   },
   book={
      series={Adv. Lect. Math. (ALM)},
      volume={17},
      publisher={Int. Press, Somerville, MA},
   },
   date={2011},
   pages={227--246},
   review={\MR{2882424}},
}

\bib{MR2648937}{article}{
   author={Cao, Huai-Dong},
   title={Recent progress on Ricci solitons},
   conference={
      title={Recent advances in geometric analysis},
   },
   book={
      series={Adv. Lect. Math. (ALM)},
      volume={11},
      publisher={Int. Press, Somerville, MA},
   },
   date={2010},
   pages={1--38},
   review={\MR{2648937 (2011d:53061)}},
}

\bib{MR2488948}{article}{
   author={Cao, Huai-Dong},
   author={Chen, Bing-Long},
   author={Zhu, Xi-Ping},
   title={Recent developments on Hamilton's Ricci flow},
   conference={
      title={Surveys in differential geometry. Vol. XII. Geometric flows},
   },
   book={
      series={Surv. Differ. Geom.},
      volume={12},
      publisher={Int. Press, Somerville, MA},
   },
   date={2008},
   pages={47--112},
   review={\MR{2488948 (2010c:53096)}},
}

\bib{MR2732975}{article}{
   author={Cao, Huai-Dong},
   author={Zhou, Detang},
   title={On complete gradient shrinking Ricci solitons},
   journal={J. Differential Geom.},
   volume={85},
   date={2010},
   number={2},
   pages={175--185},
   issn={0022-040X},
   review={\MR{2732975}},
}

\bib{MR2520796}{article}{
   author={Chen, Bing-Long},
   title={Strong uniqueness of the Ricci flow},
   journal={J. Differential Geom.},
   volume={82},
   date={2009},
   number={2},
   pages={363--382},
   issn={0022-040X},
   review={\MR{2520796 (2010h:53095)}},
}

\bib{CM1}{article}{
   author={Colding, Tobias H.},
   author={Minicozzi II, William P.},
   title={Generic mean curvature flow I; generic singularities},
 journal={Ann. of Math.},
   volume={175},
   date={2012},
   number={2},
   pages={755--833},
   issn={},
   review={},
}

\bib{CM2}{article}{
   author={Colding, Tobias H.},
   author={Minicozzi II, William P.},
   title={Smooth compactness of self-shrinkers},
 journal={Comment. Math. Helv.},
   volume={87},
   date={2012},
   number={1},
   pages={463--475},
   issn={},
   review={},
}

\bib{DX}{unpublished}{
   author={Ding, Qi},
   author={Xin, Yuanlong},
   title={Volume growth, eigenvalue and compactness for self-shrinkers},
 note={ arXiv:1101.1411.},
   }

\bib{MR1025164}{article}{
   author={Ecker, Klaus},
   author={Huisken, Gerhard},
   title={Mean curvature evolution of entire graphs},
   journal={Ann. of Math. (2)},
   volume={130},
   date={1989},
   number={3},
   pages={453--471},
   issn={0003-486X},
   review={\MR{1025164 (91c:53006)}},
   doi={10.2307/1971452},
}

\bib{MR1375255}{article}{
   author={Hamilton, Richard S.},
   title={The formation of singularities in the Ricci flow},
   conference={
      title={Surveys in differential geometry, Vol.\ II},
      address={Cambridge, MA},
      date={1993},
   },
   book={
      publisher={Int. Press, Cambridge, MA},
   },
   date={1995},
   pages={7--136},
   review={\MR{1375255 (97e:53075)}},
}

\bib{Hu}{article}{
   author={Huisken, Gerhard},
   title={Asymptotic behavior for singularities of the mean curvature flow},
   journal={J. Differential Geom.},
   volume={31},
   date={1990},
   number={1},
   pages={285--299},
   issn={0022-040X},
   review={\MR{1030675 (90m:53016)}},
}

\bib{P}{unpublished}{
   author={Perelman, G.},
   title={The entropy formula for the Ricci flow and its geometric applications},
  note={ arXiv: math.DG/0211159.},

}

\bib{MR2373611}{article}{
   author={Wylie, William},
   title={Complete shrinking Ricci solitons have finite fundamental group},
   journal={Proc. Amer. Math. Soc.},
   volume={136},
   date={2008},
   number={5},
   pages={1803--1806},
   issn={},
   review={MR2373611 (2009a:53059)},
}

\bib{Zh}{article}{
   author={Zhang, Shi Jin},
   title={On a sharp volume estimate for gradient Ricci solitons with scalar
   curvature bounded below},
   journal={Acta Math. Sin. (Engl. Ser.)},
   volume={27},
   date={2011},
   number={5},
   pages={871--882},
   issn={1439-8516},
   review={\MR{2786449 (2012g:53081)}},
}

   \end{biblist}
\end{bibdiv}

\end{document}